\documentclass[11pt,a4paper,reqno]{amsart}
\usepackage{amsmath,amssymb,mathtools,fullpage,setspace,microtype,xcolor}
\usepackage{lineno}
\usepackage[colorlinks=true,citecolor=blue,urlcolor=blue]{hyperref}
\newtheorem{theorem}{Theorem}[section]
\newtheorem{lemma}[theorem]{Lemma}
\newtheorem{proposition}[theorem]{Proposition}
\theoremstyle{remark}

\newcommand{\F}{\mathcal F}
\newcommand{\B}{\mathcal B}
\newcommand{\C}{\mathcal C}

\newcommand{\Prob}{\mathbb P}
\newcommand{\one}{\mathbf 1}

\DeclareMathOperator{\tr}{tr}
\title{On $P_4$-intersecting families of graphs}
\date{}
\author{Jie Han}
\address{School of Mathematics and Statistics, Beijing Institute of Technology, Beijing, China}
\email{{\{han.jie|bin.wang\}@bit.edu.cn}}
\author{Bin Wang}
\thanks{The first author was partially supported by the National Natural Science Foundation of China (12371341) and by the Fundamental Research Funds for the Central Universities.
The second author was supported by the China Postdoctoral Science Foundation (2026M794610).}

\begin{document}
\onehalfspacing
\begin{abstract}
Given a graph $F$, a family $\F$ of graphs on $[n]$ is
\emph{$F$-intersecting} if $G\cap H$ contains a copy of $F$ for every $G,H\in\F$.
We prove that there exists an absolute constant $\varepsilon>0$ such that
every $P_4$-intersecting family 
$\mathcal F$ satisfies $|\mathcal F|
\le
\left(\frac12-\varepsilon\right)2^{\binom n2}$, which resolves a conjecture of Alon.
Combined with Alon's reduction,
this proves that a graph $F$ admits $F$-intersecting families of
asymptotic density $1/2$ if and only if $F$ is a star forest.
\end{abstract}
\maketitle

\section{Introduction}

All graphs are simple and labelled on $[n]$. 
The notation $P_4$ means a path on four distinct vertices and three edges, and a \emph{star forest} is a forest consisting of disjoint stars.
Throughout this note we identify a graph and its edge set.
Given a graph $F$, a family $\F$ of graphs on $[n]$ is
\emph{$F$-intersecting} if $G\cap H$ contains a copy of $F$ for every $G,H\in\F$.

An intersecting family of sets on $[n]$ contains at most $2^{n-1}$ members, since it cannot contain both a set and its complement.
This bound is attained by the family of all sets containing a fixed element.
For every $t\in \mathbb N$, Katona \cite{Katona} determined the maximum size of a $t$-intersecting family, in which we require every two members of the family intersect in at least $t$ elements.
For each fixed $t$, this maximum is asymptotically $(1/2-o_n(1))2^{n}$ \footnote{For interested readers, this bound can be achieved by e.g., the family of all sets that contain at least $\lceil(n+t)/2\rceil$ elements from $[n]$.}.
For the graph-intersecting problem, the EKR bound says that any $F$-intersecting graph family on $[n]$ for $F$ with at least one edge has at most $2^{\binom{n}{2}-1}$ members.
Alon observed that if $F$ is a star forest, then there exist $F$-intersecting graph families on $[n]$ with $(1/2-o_n(1))2^{\binom{n}{2}}$ members.
He further conjectured that there is an $\varepsilon>0$ such that for every $F$ which is not a star forest, an $F$-intersecting graph family has at most $(1/2-\varepsilon)2^{\binom{n}{2}}$ members.
Alon pointed out that this holds for all non-bipartite graphs $F$ and it suffices to prove the conjecture for the 3-edge path $P_4$ -- indeed, if a graph-family $\F$ is $G$-intersecting and $G$ is not a star forest, then $G$ contains $P_4$ as a subgraph, and thus $\F$ is $P_4$-intersecting.
See Alon--Spencer~\cite[p. 271]{AS}, Ellis--Filmus--Friedgut~\cite[§6.3]{EFF}, and the survey of Ellis~\cite[Conjecture 4.7]{EllisSurvey}.

In this note we prove the conjecture.

\begin{theorem}\label{thm:main}
There exists a constant $\varepsilon>0$ such that for every $n$ and every $P_4$-intersecting graph family $\F$,
\[
             |\F|\le \left(\frac12-\varepsilon\right)2^{\binom n2}.
\]
\end{theorem}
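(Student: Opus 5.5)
We use a pairing/coupling argument in the spirit of the proof that an intersecting family has size at most half. We exhibit a random map $\sigma$ on graphs on $[n]$ with the following properties: $\sigma$ preserves the uniform measure; for each $G$, the intersection $G\cap\sigma(G)$ is $P_4$-free, i.e.\ a disjoint union of stars and triangles; and $\sigma(\sigma(G))$ is well defined. We then show that a constant fraction of graphs lie in short \emph{odd} structures, on which any $P_4$-intersecting family must lose a constant fraction.

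Concretely, take a uniformly random perfect matching (or near-perfect matching) $M$ of $[n]$ and consider the complementary operation $G\mapsto G\triangle (K_n\setminus M)$. That is, we flip every edge except those of $M$. Then $G\cap G'\subseteq M$ is a matching, hence $P_4$-free. So $\F$ contains at most one graph from each such pair, which already recovers the bound $1/2$.

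To gain $\varepsilon$, we use triples instead of pairs. Fix a random partition of the edge set $E(K_n)=M\cup A_1\cup A_2\cup A_3$, where $M$ is a matching. Choose it so that for any graph $G$ and any two of the three graphs
\[G_i=(G\cap M)\cup (\text{a pattern on } A_1\cup A_2\cup A_3 \text{ determined by } i),\]
their pairwise intersections avoid $P_4$. For example, assign $G$'s edges on $A_j$ so that $G_i$ and $G_{i'}$ agree only on $M\cup A_{k}$ with $k\ne i,i'$, and require each $A_k\cup M$ to be $P_4$-free. Since $\F$ can contain at most one of three mutually incompatible graphs, averaging over the uniform measure gives $|\F|\le \frac13 2^{\binom n2}$ on the triple-structured part. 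The difficulty is that a partition into three $P_4$-free pieces plus a matching cannot cover $K_n$ (star forests have at most $n-1$ edges). So genuine triples are only available locally, and the global argument must combine pairing on most edges with a local odd cycle.

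The refined plan is therefore as follows. Fix a small vertex set $S$ (say of size $4$ or $5$). On edges outside $K[S]$ we use the complement pairing. Inside $K[S]$ we find an odd cycle $H_1,\dots,H_{2k+1}$ of graphs on $S$ such that consecutive $H_i, H_{i+1}$ have $P_4$-free intersection once combined with the outside complement pairing. The intersection of the whole graphs is then contained in $(H_i\cap H_{i+1})\cup M'$, and we must check that this stays $P_4$-free. This requires controlling how the matching edges attach to $S$; it is the main obstacle, since a single matching edge at a vertex of $S$ can extend a star in $H_i\cap H_{i+1}$ into a $P_4$.

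To handle it, we choose $M'$ to avoid $S$ entirely. Vertices of $S$ then meet no pairing-intersection edges outside $S$, and the intersection is the disjoint union of a matching outside $S$ and $H_i\cap H_{i+1}$ inside $S$. We then verify that on $K_4$ there exist $5$ graphs forming an odd cycle in the "$P_4$-free intersection" graph, each with $P_4$-free consecutive intersections. This finite check can be done by hand, using graphs such as triangle, star, and complements. An independent set in a $5$-cycle has density at most $2/5$. The fraction of graph configurations covered by such odd cycles is a positive constant $c$ independent of $n$, because the construction only involves the bounded set $S$. This yields $|\F|\le (\tfrac12-\tfrac c{10})2^{\binom n2}$.
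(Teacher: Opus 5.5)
Your construction fails at the edge that is supposed to close the odd cycle. In your scheme, consecutive members $G_i=H_i\cup R_i$ have outside parts that are complementary off $M'$: $R_{i+1}$ is obtained from $R_i$ by flipping every edge not inside $S$ and not in $M'$. So the outside parts alternate $R,\bar R,R,\dots$. After $2k+1$ steps you reach $H_1\cup\bar R$, not $H_1\cup R$. What you actually get is an even cycle of length $2(2k+1)$, and $\F$ can take every other vertex of it, which gives density $1/2$ and no gain. If you instead force the closing pair $H_{2k+1}\cup R$, $H_1\cup R$, their intersection contains all of $R$. That contains a $P_4$ for all but a $2^{-\Omega(n^2)}$-fraction of $R$, since there are only $2^{O(n\log n)}$ $P_4$-free graphs. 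Your finite check on $K_4$ cannot help here, because the obstruction comes entirely from the edges outside $K[S]$.

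No other bounded-length odd-cycle gadget can fix this either. Suppose $G_1,\dots,G_{2k+1}$ are graphs with $G_i\cap G_{i+1}$ $P_4$-free for all $i$ (indices mod $2k+1$). For an edge $e$, let $I_e=\{i:e\in G_i\}$ and let $c_e$ be the number of $i$ with $i,i+1\in I_e$. Since $C_{2k+1}$ has independence number $k$, one checks that $|I_e|\le k+c_e$. A $P_4$-free graph is a disjoint union of stars and triangles, so it has at most $n$ edges. Hence
\[
\sum_i|G_i|=\sum_e|I_e|\le k\binom n2+\sum_i|G_i\cap G_{i+1}|\le k\binom n2+(2k+1)n .
\]
So every such cycle contains a graph with at most $\frac{k}{2k+1}\binom n2+n$ edges. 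For fixed $k$, such graphs form a $2^{-\Omega(n^2)}$-fraction of the cube. Disjoint (or uniformly weighted) odd cycles of bounded length therefore cover only an $o(1)$ fraction of the cube. Your constant $c$ cannot exist, and mixing with pairings elsewhere gives at best $1/2-o(1)$.

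A constant gap needs a global input. The paper takes it from Gillott's noise lemma: for more than half of all $x$ there is a sparse product perturbation $Y_P$ with $\sum_e P_e^2\le L$ and $\Prob(x+Y_P\in\F)>\tau$, while $G_P$ is a forest with probability at least $1-\tau$. This gives $\mu(\F+\C)\ge 1/2+\eta$. The booster lemma then turns this into expansion $|X+\C|\ge(1+2\eta)|X|$ for all small $X$. Finally, $\C\subseteq\B+\B$ and Pl\"unnecke's inequality, applied to $(\F+\B)\cap(\F+\one)=\varnothing$, transfer this to $P_4$-free graphs.
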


\subsection*{Proof ideas}
Gillott recently proved a constant gap for cycle-intersecting families.
Our proof builds on his approach. 

For a family $\F$ on a set $X$, let its \emph{density} $\mu(\F)$ be $|\F|/2^{|X|} $.
It will be convenient to work with the Boolean setting.
Let $\Gamma=\mathbb F_2^{E(K_n)}$, $M=2^{\binom{n}{2}}$, and
$\mu(A)=|A|/M$ be the uniform measure. 
We identify every graph with the indicator vector of its edge set in $\mathbb F_2^{E(K_n)}$,
and denote the complete graph by $\one$.
Let $\B$ be the $P_4$-free graphs, and let $\C$ be the forests, including
the empty graph in both cases.
Suppose $\F$ is an $F$-intersecting family and let $\mathcal Q$ be the $F$-free graphs.
Note crucially that
\[
(\F+\mathcal Q) \cap (\F+\one) = \emptyset.
\]
Indeed, an equality $h+q=g+\one$, with $h,g\in\F$ and $q\in\mathcal Q$,
would give $g=h+\one+q$. Every edge in $h\cap g$ then belongs to $q$,
contradicting the definition of $\F$ (in set language, if graphs $H, G\in \F$ and $Q\in \mathcal Q$ satisfy $H\triangle Q = G^c$ where $\triangle$ is the symmetric difference, then $H\cap G = H\setminus G^c = H\setminus (H\triangle Q) \subseteq Q$ is $F$-free).
Therefore, as $\mu(\F) = \mu(\F+\one)$, if $\mu(\F+\mathcal Q) \ge 1/2+\eta$, then we have $\mu(\F)\le 1/2 - \eta$.
That is, a key approach is to study $\F+\mathcal Q$.


Towards this, Gillott showed that (cf. Lemmas~\ref{lem:noise} and~\ref{lem:cycles}), for a given family $\F$ on $[n]$ of density $1/2-\eta$, there exists a family $\mathcal G$ of graphs on $[n]$ of density $1/2+\eta$ such that for every $G\in \mathcal G$, there exists a forest (i.e., cycle-free) $Q$ such that $G\triangle Q\in \F$.
Equivalently, in the Boolean cube, $g+q\in \F$, which gives, $g=(g+q)+q\in \F+\C$, for all $g\in \mathcal G$ and $\mu(\mathcal G)\ge 1/2+\eta$. This shows $\mu(\F+\C)\ge 1/2+\eta$.
The key to the result above is to use a sparse ``random'' graph to show that for more than half of the points $g$ in the cube, such forest $q$ exists.
Here the sparse random graph $G_P$ is obtained by carefully choosing a probability $P_e$ for every $e\in E(K_n)$ along a sequential exposure of the coordinates of a uniformly
random point in the cube.
We collect Gillott's tools and obtain Proposition~\ref{prop:forest}, which says that every family $\F$ with density at least $1/2-\eta$ satisfies that $\mu(\F+\C) \ge 1/2 + \eta$.

For our problem, we can use the $P_4$-free graphs $\B$, but not the forests $\C$.
However, they are closely related via the additive relation $\C\subseteq \B + \B$.
This motivates us to exploit the additive structure using elementary sumset estimates.
Indeed, Lemma~\ref{lem:upgrade} allows us to pass the boost estimate of $\F$ from Proposition~\ref{prop:forest} to a subset $X\subseteq \F$, and Lemma~\ref{lem:petridis} upper bounds $|X+\C|\le |X+\B+\B|$ in terms of $|\F+\B|/|\F|$.
Rearranging gives the desired estimate.

\section{Sumset input}\label{sec:additive}

For subsets $U,V$ of an abelian group, the sum of $U$ and $V$ is
$U+V=\{u+v:u\in U,\ v\in V\}$ and the sum of more than two sets is defined analogously. 
We first recall an elementary result of Pl\"unnecke \cite{Plunnecke1970}, and a short proof can be found in \cite{Petridis}.

\begin{lemma}[Pl\"unnecke~\cite{Plunnecke1970}]
\label{lem:petridis}
Let $A,B$ be nonempty finite subsets of an abelian group with $\frac{|A+B|}{|A|}\le r$. 
Then there is a nonempty
$X\subseteq A$ such that $|X+B+B|\le r^2|X|$.
\hfill \qedsymbol
\end{lemma}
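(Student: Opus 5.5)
This is the Plünnecke inequality in Petridis's form, and the plan is to follow Petridis's argument.

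Among all nonempty subsets $Z\subseteq A$, choose $X$ minimizing the ratio $K=|X+B|/|X|$. Since $A$ itself is a candidate, $K\le |A+B|/|A|\le r$.

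The main step is Petridis's lemma: for every finite set $C$ in the group,
\[
|X+B+C|\le K|X+C|.
\]
With $C=B$ this gives $|X+B+B|\le K|X+B|=K^2|X|\le r^2|X|$, which is the claim.

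We prove the lemma by induction on $|C|$.

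\emph{Base case.} If $|C|=1$, say $C=\{c\}$, then both sides are translates: $|X+B+c|=|X+B|=K|X|=K|X+c|$.

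\emph{Inductive step.} Write $C'=C\cup\{c\}$ with $c\notin C$, and set
\[
Y=\{x\in X:\ x+B+c\subseteq X+B+C\}.
\]
Decompose
\[
X+B+C'=(X+B+C)\cup\bigl((X+B+c)\setminus(Y+B+c)\bigr).
\]
This holds because for $x\in Y$ the set $x+B+c$ is already covered by $X+B+C$. It follows that
\[
|X+B+C'|\le |X+B+C|+|X+B|-|Y+B|.
\]
By minimality of $K$, we have $|Y+B|\ge K|Y|$; when $Y=\emptyset$ this is trivially true. Combining with the induction hypothesis,
\[
|X+B+C'|\le K|X+C|+K|X|-K|Y|.
\]

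It remains to lower-bound $|X+C'|$. Again decompose
\[
X+C'=(X+C)\cup\bigl((X+c)\setminus(Z+c)\bigr),\qquad Z=\{x\in X:\ x+c\in X+C\}.
\]
This union is disjoint. We have $Z\subseteq Y$: if $x+c=x'+c''$ with $x'\in X$ and $c''\in C$, then $x+B+c=x'+B+c''\subseteq X+B+C$. Hence
\[
|X+C'|=|X+C|+|X|-|Z|\ge |X+C|+|X|-|Y|.
\]
Multiplying by $K$ and comparing with the previous bound closes the induction.

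\emph{Main obstacle.} The delicate point is the bookkeeping in the inductive step: we must check that the correct subset $Y$ is removed in both decompositions, and that $Z\subseteq Y$, so that the two estimates match up. Everything else is routine.
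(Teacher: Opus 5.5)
Your proof is correct. It is exactly Petridis's argument: choose $X$ minimizing $|X+B|/|X|$, then prove $|X+B+C|\le K|X+C|$ by induction on $|C|$. The paper itself states the lemma without proof and points to Petridis for precisely this short argument, so your approach matches the intended one, including the key check $Z\subseteq Y$.
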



The following elementary lemma shows that if adding $S$ can boost the density of every near-half family, then it can boost the density of all smaller sets.

\begin{lemma}[Booster]\label{lem:upgrade}
Let $\Gamma$ be a finite abelian group of order $M\ge2$, and let
$S\subseteq\Gamma$ contain $0$ and generate $\Gamma$.
Suppose $0<\eta<1/4$ and
for every $A\subseteq\Gamma$ with $|A|\ge(1/2-\eta)M$, it holds that $|A+S|\ge(1/2+\eta)M$. 
Then every nonempty $X\subseteq\Gamma$
with $|X|\le M/2$ satisfies
\[
                         |X+S|\ge(1+2\eta)|X|.
\]
\end{lemma}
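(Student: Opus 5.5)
The plan is to split by the size of $X$. \textbf{Large case:} if $|X|\ge(1/2-\eta)M$, the hypothesis gives $|X+S|\ge(1/2+\eta)M$. Since $|X|\le M/2$, we have $(1+2\eta)|X|\le(1/2+\eta)M$, so this case is immediate. \textbf{Small case:} if $|X|<(1/2-\eta)M$, I would argue by contradiction. Suppose $|X+S|<(1+2\eta)|X|$. The aim is to build from $X$ a set $A$ with $|A|\ge(1/2-\eta)M$ but $|A+S|<(1/2+\eta)M$, which contradicts the hypothesis.

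\emph{First attempt: a union of random translates.} Set $A=\bigcup_{i\le k}(X+t_i)$ with $t_1,\dots,t_k$ uniform in $\Gamma$. Then $A+S=\bigcup_i(X+S+t_i)$. Writing $p=|X|/M$ and $q=|X+S|/M<(1+2\eta)p$, we get
\[
\mathbb E|A|=M\bigl(1-(1-p)^k\bigr),\qquad \mathbb E|A+S|=M\bigl(1-(1-q)^k\bigr).
\]
Choose $k$ so that $\mathbb E|A|$ first reaches $(1/2-\eta)M$. Then look at a linear combination such as $|A+S|-\lambda|A|$ and pick an outcome $t$ beating its expectation. This would give a single $A$ that is large while $A+S$ is small.

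\emph{Main obstacle in the random-translate route.} The difficulty is that $1-(1-x)^k$ is concave. Its growth from $p$ to $q$ can exceed the factor $q/p$ near density $1/2$, because the comparison between $(1-q)^k$ and $(1-p)^{kq/p}$ points the wrong way. So the naive averaging may lose exactly the $2\eta$ margin.

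\emph{Fallback: submodularity.} For this reason I would make the following the real backbone. Define
\[
h(Y)=|Y+S|-(1+2\eta)|Y|.
\]
Since $(Y\cup Z)+S=(Y+S)\cup(Z+S)$ and $(Y\cap Z)+S\subseteq(Y+S)\cap(Z+S)$, the function $h$ is submodular:
\[
h(Y\cup Z)+h(Y\cap Z)\le h(Y)+h(Z).
\]
I would then use a Hamidoune-type extremal choice, in the following steps.
\begin{enumerate}
\item Among nonempty $Y$ with $|Y|\le M/2$, take $Y$ minimizing $h(Y)$, and among those one of minimal size. If $h(Y)\ge0$, we are done, so assume $h(Y)<0$.
\item Because $S$ contains $0$ and generates $\Gamma$, a proper nonempty subset cannot satisfy $Y+S=Y$. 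Hence $Y$ and its translates are genuinely expanding.
\item Apply submodularity to $Y$ and a translate $Y+t$. Minimality then forces $Y\cap(Y+t)$ to be empty or all of $Y$, so $Y$ is a coset-like atom. Since $h$ is translation-invariant, every translate is also a minimizer.
\item Take a disjoint union of translates of $Y$. Its total size can be pushed into $[(1/2-\eta)M,M/2]$, and submodularity keeps $h$ negative along the way. This produces a set in the large case with $h<0$, which contradicts the first paragraph.
\end{enumerate}

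The step I expect to be delicate is the last one. Adding one more translate must not overshoot $M/2$ before passing $(1/2-\eta)M$. If $|Y|$ is too large for that, the gap $(1/2-\eta)M\le|Y|\le M/2$ already falls in the large case, and $\eta<1/4$ should give enough room in the remaining cases.
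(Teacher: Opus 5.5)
Your outline has the right ingredients: the submodular functional $h$, a minimizer of least size, and the fact that $Y+S=Y$ is impossible for a proper nonempty $Y$. However, Step 3 does not go through as stated, and Step 4 is both unnecessary and not valid.

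In Step 3 you apply minimality to $Y\cup(Y+t)$ for an arbitrary translate. Minimality only controls sets of size at most $M/2$, and $|Y\cup(Y+t)|$ can be as large as $2|Y|$. That exceeds $M/2$ as soon as $|Y|>M/4$, and the small case only tells you $|Y|<(1/2-\eta)M$. So the ``coset-like atom'' conclusion is unjustified for general $t$. Step 4 inherits this gap, and the size-pushing can genuinely fail. If $Y$ were a coset of an index-$3$ subgroup and $\eta<1/6$, the sizes $kM/3$ of disjoint unions of translates miss $[(1/2-\eta)M,M/2]$ entirely. In the coset case the correct finish would be that $S\not\subseteq H$ forces $|Y+S|\ge 2|Y|$, but your sketch does not reach that.

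The missing idea is to translate only by $t\in S$.
\begin{itemize}
\item Since $0,t\in S$, you have $Y\cup(Y+t)\subseteq Y+S$, so $|Y\cup(Y+t)|\le|Y+S|<(1+2\eta)|Y|<(1+2\eta)(1/2-\eta)M<M/2$. This uses that your large case has already excluded $|Y|\ge(1/2-\eta)M$ for a set with $h(Y)<0$.
\item The same bound is $<2|Y|$, so $Y\cap(Y+t)\neq\emptyset$.
\item Both sets are now admissible. Submodularity together with $h(Y+t)=h(Y)$ makes $Y\cap(Y+t)$ a minimizer, and minimality of size forces $Y+t=Y$.
\item This holds for every $t\in S$, so $Y+S=Y$. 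That contradicts your Step 2 directly, since $Y$ would have to be $\Gamma$. No disjoint-union step is needed.
\end{itemize}
This is exactly the paper's proof. The paper phrases it with the ratio $\kappa=|X+S|/|X|$ instead of your additive $h$, and your large case appears there as the claim $|X|>M/(2\kappa)$.
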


\begin{proof}
Let $\kappa$ be the minimum of $|X+S|/|X|$ over nonempty sets $X$ of size at
most $M/2$. 
Choose a minimizer $X$ of least cardinality. Suppose for a
contradiction that $\kappa<1+2\eta<2$.

We first claim that
\begin{equation}\label{eq:minsize}
                         |X|>\frac{M}{2\kappa}.
\end{equation}
Indeed, otherwise fix any $t\in S$ and set
$U=X\cup(X+t)$, $I=X\cap(X+t)$.
Since $0,t\in S$, we have $U\subseteq X+S$ and hence
$|U|\le\kappa|X|\le M/2$. 
Also $I\ne\varnothing$, since otherwise $|U|=2|X|\le\kappa|X|$, a contradiction with $\kappa<2$.

Note that $U+S=(X+S)\cup (X+t+S)$, and
$I+S\subseteq(X+S)\cap(X+t+S)$. Consequently,
\begin{align*}
 \kappa(|I|+|U|)
 &\le |I+S|+|U+S|\\
 &\le |X+S|+|X+t+S|
   =2\kappa|X|=\kappa(|I|+|U|).
\end{align*}
The first inequality holds since $|I+S|\ge\kappa|I|$ and $|U+S|\ge\kappa|U|$ and the last equality holds by inclusion--exclusion. 
Therefore all inequalities above are equalities, so $I$ is another minimizer. The choice
of $X$ forces $I=X$, yielding $X+t=X$. This holds for every $t\in S$.
Since $S$ generates $\Gamma$ and $X$ is nonempty, it follows that
$X=X+S=\Gamma$, contradicting $|X|\le M/2$. This proves~\eqref{eq:minsize}.

Now~\eqref{eq:minsize} gives
\[
 \frac{|X|}{M}>\frac1{2\kappa}
    >\frac1{2(1+2\eta)}>\frac12-\eta.
\]
By the assumption of the lemma, $|X+S|\ge(1/2+\eta)M$. 
On the other hand, $|X+S|=\kappa|X|\le\kappa M/2<(1/2+\eta)M$, a contradiction.
\end{proof}

\section{Graph-family input}\label{sec:assembly}


In this section we include the following tools.

\begin{lemma}\label{lem:forests-two}
We have $\C\subseteq\B+\B$.
\end{lemma}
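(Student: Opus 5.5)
We must show that every forest $T$ on $[n]$ can be written as $T=B_1+B_2$ with $B_1,B_2\in\B$. Since all graphs here are edge-sets with addition being symmetric difference, it suffices to split the edge set of $T$ into two disjoint parts $B_1\cup B_2=T$, each of which is $P_4$-free; then $B_1+B_2=B_1\triangle B_2=T$. It is also enough to treat each component tree separately, because a disjoint union of $P_4$-free graphs on disjoint vertex sets is again $P_4$-free: any $P_4$ is connected and so lies inside one component.

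So fix a tree component with at least one edge. Root it at an arbitrary vertex $r$ and let $d(v)$ denote the depth of $v$. Every edge joins some vertex $v\neq r$ to its parent $p(v)$. Assign this edge to $B_1$ if $d(v)$ is odd and to $B_2$ if $d(v)$ is even. Consider the edges of $B_1$: each one joins a vertex at odd depth $2k+1$ to its parent at depth $2k$. Group these edges by their parent endpoint. For a vertex $u$ at even depth, the edges in $B_1$ through $u$ are exactly the edges from $u$ to its children. The reason is that the edge from $u$ to its own parent has child endpoint $u$ at even depth, so it lies in $B_2$. Meanwhile, each odd-depth vertex meets exactly one $B_1$-edge, namely the one to its parent, since the edges to its children have child endpoint at even depth. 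Hence $B_1$ is a vertex-disjoint union of stars centred at the even-depth vertices, that is, a star forest. By the same argument with parity swapped, $B_2$ is a star forest centred at the odd-depth vertices.

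Finally, a star forest is $P_4$-free. Suppose $abcd$ were a path in a star forest. Its middle edge $bc$ has both endpoints of degree at least $2$ in the path. In a star, however, every edge has a leaf endpoint; the exception is a single-edge star, where both endpoints have degree $1$. This is a contradiction. Therefore $B_1,B_2\in\B$, which gives $T=B_1+B_2\in\B+\B$. The empty forest is handled as $\emptyset=\emptyset+\emptyset$.

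The only point needing care is checking that the parity split really produces disjoint stars, that is, that no odd-depth vertex has $B_1$-degree greater than one. This is the step verified above. The remainder of the argument is routine.
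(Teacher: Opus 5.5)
Your proof is correct and is essentially the paper's argument. The paper also roots each component and splits edges by the parity of the depth of the endpoint nearer the root, which gives the same partition as your split by the parity of the child's depth, and so yields two star forests. You additionally verify that each parity class is a union of vertex-disjoint stars and that star forests are $P_4$-free, which the paper calls easy and leaves unchecked.
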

\begin{proof}
Let $F\in \C$.
Root each nontrivial component of $F$ arbitrarily. 
Let $\operatorname{depth}(v)$ denote the distance from a vertex $v$ to the root of its component.
For each edge $uv\in E(F)$, where $u$ is the endpoint closer to the root,
assign $uv$ to $F_0$ or $F_1$ according to the parity of $\operatorname{depth}(u)$. 
It is easy to see that both $F_0$ and $F_1$ are star forests, and hence $P_4$-free. 
Since $F_0$ and $F_1$ form a partition of $F$, we have
$F=F_0+F_1\in\B+\B$.
Hence $\C\subseteq \B+\B$.
\end{proof}

For $P\in[0,1]^m$, let $Y_P$ have independent Bernoulli coordinates of parameters $P_1,\ldots,P_m$.
The first result says that given a family $\F$ with $\mu(\F)\ge 1/2-\eta$ and a random point $x$, one can construct adaptively a small noise vector $Y_P$ such that $x+Y_P\in \F$ with a non-trivial probability.

\begin{lemma}\cite[Theorem 3.1]{Gillott}
\label{lem:noise}
There exist $\eta>0$ and $0<L<1/2$ such that the following holds.
Let $\F\subseteq\{0,1\}^m$ have density $\alpha\ge1/2-\eta$.
For a proportion at least $1/2+\eta$ of points $x$, there exists
$P=P(x)\in[0,1]^m$ with
\[
           \sum_i P_i^2\le L,
           \qquad \Prob(x+Y_P\in \F)> \tau=\frac{(2L)^{3/2}}{1-\sqrt{2L}}.  \tag*{\qedsymbol}
\]
\end{lemma}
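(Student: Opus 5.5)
The plan is to prove the statement with a martingale along a sequential exposure of the coordinates of $x$, choosing the noise parameters adaptively from the revealed prefix. Write $f=\one_{\F}$ and, for a prefix $x_{\le i}=(x_1,\dots,x_i)$, let $M_i(x)=\mathbb E[f\mid x_{\le i}]$ be the density of $\F$ in the corresponding subcube. Then $M_0=\alpha$, $M_m=f(x)$, and the increments $d_i=M_i-M_{i-1}$ satisfy $\mathbb E\sum_i d_i^2=\mathrm{Var}(f)\le 1/4$. The two children of a subcube average to the parent. So when revealing $x_i$ lowers the density by $|d_i|$, flipping that coordinate instead would raise it by $|d_i|$. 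This is what makes a flip on such a coordinate beneficial.

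\emph{Step 1 (trivial points).} Every $x\in\F$ works with $P=0$, since then $\Prob(x+Y_P\in\F)=1>\tau$. This gives a proportion $\alpha\ge 1/2-\eta$ of good points. It remains to find a further $2\eta$ proportion of good points $x\notin\F$.

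\emph{Step 2 (choice of $P$).} For $x\notin\F$, set $P_i=c\,|d_i(x)|$ on the coordinates with $d_i<0$, for a small constant $c$, and set $P_i=0$ otherwise. We stop the process, setting all later $P_j=0$, as soon as $\sum_{j\le i}P_j^2$ would exceed $L$. This enforces $\sum_i P_i^2\le L$ pointwise. Because $P_i$ depends only on $x_{\le i}$, the vector $P$ is a deterministic function of $x$, as the statement requires.

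\emph{Step 3 (the noisy path hits $\F$).} Let $z=x+Y_P$ and study $N_i=\mathbb E[f\mid z_{\le i}]$.
\begin{itemize}
\item Each potential flip at coordinate $i$ raises the conditional density in expectation by about $2P_i|d_i|$, so $N_i$ has a positive drift relative to $M_i$.
\item The difficulty is that once $z$ leaves the path of $x$, the conditional densities along $z$ no longer equal the $M_i$ that were used to define $P$.
\item I would control this by a geometric bound. A flip occurs with probability $P_i$, and after each flip the remaining noise has $\ell_2$-mass at most $L$. Summing over the number of flips gives a series with ratio of order $\sqrt{2L}$.
\end{itemize}
Summing this series should produce the lower bound $\Prob(z\in\F)>(2L)^{3/2}/(1-\sqrt{2L})$, which matches the form of $\tau$ in the statement.

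\emph{Step 4 (counting good bad points).} It remains to show that at least a $2\eta$ proportion of all $x$ lie outside $\F$ and still have enough negative increments before stopping for Step 3 to apply. I would use Markov's inequality on the quadratic variation $\sum_i d_i^2$ together with the identity $\mathbb E[M_m-M_0]=0$. Since $M$ must travel from $\alpha\approx 1/2$ down to $0$ on $x\notin\F$, its negative increments must accumulate. On the event of bounded quadratic variation, the choice of $P$ then captures a constant amount of this downward movement.

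I expect the main obstacle to be Step 3: decoupling the conditional densities along the perturbed path $z$ from those along $x$. This is where the constants $L<1/2$ and $\eta$ would have to be tuned so that the geometric series converges and still leaves the required $2\eta$ margin.
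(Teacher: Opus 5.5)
\textbf{There is a genuine gap in Step 3.} The paper does not prove this lemma; it imports it from Gillott, whose construction, like yours, chooses $P$ along a sequential exposure of the coordinates of $x$. Your Steps 1 and 2 are fine, provided $L$ is taken small enough that $\tau<1$. Step 3 is the entire content of the lemma, and it fails as described.

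The constraint $\sum_i P_i^2\le L$ does not make flips rare. The number of flips in $Y_P$ has mean $\sum_i P_i$, which can be as large as $\sqrt{Lm}$, and for some families it must be large. Take $\F=\{x:|x|\ge m/2\}$. Any constant proportion of the points outside $\F$ contains points at Hamming distance of order $\sqrt m$ from $\F$. For those points, $\Prob(x+Y_P\in\F)>\tau$ forces order $\sqrt m$ flips with probability above $\tau$. So there is no series over the number of flips with ratio $\sqrt{2L}$. The $\ell_2$-mass left after a flip says nothing about the probability of further flips, which is governed by the $\ell_1$-mass. The path of $z$ leaves that of $x$ almost at once, and from then on the $P_j$ are tuned to the wrong path. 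That decoupling is exactly what has to be proved; it cannot be summed away.

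The match of your series with $\tau$ is also spurious. $\tau=\sum_{\ell\ge3}(2L)^{\ell/2}$ is the cycle bound of Lemma~\ref{lem:cycles}, coming from $\operatorname{tr}(M^\ell)$. The lemma only needs a hitting probability at least some $\beta(L)$ with $\beta(L)/L^{3/2}\to\infty$ as $L\to0$, followed by a small choice of $L$.

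\textbf{The drift heuristic also does not yield a per-point bound.} For a fixed $x$, the densities $M_i(x)$ average over a uniform future. The future of $z$, however, is $x_{>i}$ plus sparse noise. So $N_i$ is not a martingale, and its drift says nothing about $\Prob_Y(x+Y_P\in\F)$ for that particular $x$. The heuristic only makes sense with $x$ random. The natural target is a bound like $\mathbb E_x\bigl[\one_{x\notin\F}\,\Prob_Y(x+Y_{P(x)}\in\F)\bigr]\ge 2\eta+\tau$, after which averaging gives a $2\eta$ proportion of good points outside $\F$. Along the way one must separate the gain on $x\notin\F$ from what the same exposure does on $x\in\F$.

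Step 4 does not supply this. The first-order gain of your scheme is $c\sum_{d_i<0}d_i^2$, a downward quadratic variation. The remark that $M$ must fall from about $1/2$ to $0$ controls $\sum_{d_i<0}|d_i|$, which is a different quantity. Markov's inequality on $\sum_i d_i^2$ bounds the variation from above, not below.

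As written, the proposal gives no lower bound on $\Prob(x+Y_P\in\F)$ for any $x\notin\F$. This is where Gillott's argument, with its concentration inequalities, does the real work.
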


For the rest of this note, we take the constants $\eta$ and $L$ from Lemma~\ref{lem:noise}. 
By decreasing $\eta$ if necessary, we may assume that $\eta<1/4$.
The following result is proved and used by Gillott to show that every cycle-intersecting family of graphs has density at most $1/2-o(1)$, see the proof of~\cite[Theorem 3.2]{Gillott}.
We include a short proof here.

\begin{lemma}\cite{Gillott}\label{lem:cycles}
Given $P=(P_e)\in [0,1]^{\binom{n}{2}}$, let $G_P$ be formed by including every edge $e$ independently with probability $P_e$.
If $\sum_{e\in E(K_n)} P_e^2\le L$, then $\mathbb{P}(G_P\text{ contains a cycle})\le \tau$.
\end{lemma}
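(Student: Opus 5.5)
We prove the bound by a first-moment count: bound the expected number of cycles in $G_P$ and use Markov's inequality (a union bound over cycles). For $k\ge 3$, a cycle of length $k$ in $K_n$ is determined by a cyclic sequence of distinct vertices $v_1,\dots,v_k$ up to rotation and reflection, and it lies in $G_P$ with probability $\prod_{i=1}^k P_{v_iv_{i+1}}$, with indices taken mod $k$. Hence
\[
\mathbb P(G_P\text{ contains a cycle})\le \sum_{k\ge3}\frac{1}{2k}\sum_{v_1,\dots,v_k}\prod_{i=1}^k P_{v_iv_{i+1}}\le \sum_{k\ge 3}\frac{1}{2k}\tr(A^k),
\]
where $A$ is the symmetric $n\times n$ matrix with $A_{uv}=P_{uv}$ for $u\ne v$ and zero diagonal. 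Dropping the distinctness constraint only increases the sum, since all entries are nonnegative.

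Next we control $\tr(A^k)$ spectrally. Let $\lambda_1,\dots,\lambda_n$ be the real eigenvalues of $A$. Then
\[
\sum_j\lambda_j^2=\tr(A^2)=\sum_{u\ne v}P_{uv}^2=2\sum_e P_e^2\le 2L,
\]
so $|\lambda_j|\le\sqrt{2L}<1$ for every $j$, using $L<1/2$. For $k\ge3$ this gives
\[
\tr(A^k)\le \sum_j|\lambda_j|^k\le \Bigl(\max_j|\lambda_j|\Bigr)^{k-2}\sum_j\lambda_j^2\le (2L)^{(k-2)/2}\,2L=(2L)^{k/2}.
\]

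Summing the geometric series, and simply discarding the factor $1/(2k)\le 1$,
\[
\sum_{k\ge3}\tr(A^k)\le\sum_{k\ge3}(2L)^{k/2}=\frac{(2L)^{3/2}}{1-\sqrt{2L}}=\tau,
\]
which is exactly the $\tau$ of Lemma~\ref{lem:noise}. This completes the proof.

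The key step is the eigenvalue estimate $\sum_j|\lambda_j|^k\le(2L)^{k/2}$. It must use only the Frobenius norm bound, and it relies on $\sqrt{2L}<1$ so that the geometric series converges.
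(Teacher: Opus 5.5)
Your proof is correct and follows essentially the same route as the paper. Both arguments take a first-moment/union bound over cycles, compare $2k\,\mathbb E N_k$ with $\tr(A^k)$ by counting closed walks in a nonnegative matrix, use the Frobenius-norm eigenvalue estimate $\tr(A^k)\le(\tr A^2)^{k/2}\le(2L)^{k/2}$, and finish with the geometric sum using $\sqrt{2L}<1$.
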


\begin{proof}
 Let $M=(M_{uv})_{u,v\in[n]}$ be the symmetric matrix with $M_{uv}=P_{uv}$ for $u\ne v$ and 0 otherwise. 
 Then
 $\tr(M^2)=\sum_{\{u,v\}}M_{uv}^2=2\sum_e P_e^2\le2L$.
 For each $\ell\ge3$,
 let $N_{\ell}$ be the number of $\ell$-cycles in $G_P$.
It is easy to see that $\mathbb E N_\ell=
\sum_{C\in\mathcal C_\ell}
\prod_{e\in E(C)}P_e$ where $\mathcal C_\ell$ is the family of $\ell$-cycles in $K_n$.
Since every $\ell$-cycle contributes $2\ell$ closed walks of length $\ell$ and the other terms are nonnegative, we have $2\ell\cdot\mathbb E N_\ell
\le \operatorname{tr}(M^\ell)$.
Let $\lambda_1,\ldots,\lambda_n$ be the real eigenvalues of $M$, and let
$S=\sum_j\lambda_j^2=\tr(M^2)$. 
Since $|\lambda_j|\le\sqrt S$, we have
\[
 0\le\tr(M^\ell)\le\sum_j|\lambda_j|^\ell
 \le S^{(\ell-2)/2}\sum_j\lambda_j^2=S^{\ell/2}\le(2L)^{\ell/2}.
\] 
A union bound over simple cycles, followed by a geometric sum, gives
\[
\mathbb P(G_P\text{ contains a cycle})\le \sum_{\ell=3}^{n}\mathbb E N_\ell\le \sum_{\ell=3}^{n}\operatorname{tr}(M^\ell)\le \sum_{\ell=3}^{\infty}(2L)^{\ell/2}=
\frac{(2L)^{3/2}}{1-\sqrt{2L}}
=\tau.\qedhere
\]
\end{proof}

We combine the previous two results and obtain the following one (again essentially proved by Gillott~\cite{Gillott}).

\begin{proposition}[Forest boost]\label{prop:forest}
Every graph family $\F$ with
$\mu(\F)\ge1/2-\eta$ satisfies $\mu(\F+\C)\ge1/2+\eta$.
\end{proposition}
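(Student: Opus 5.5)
We combine Lemma~\ref{lem:noise} with Lemma~\ref{lem:cycles}, working in $\Gamma=\mathbb F_2^{E(K_n)}$ with $m=\binom n2$ coordinates indexed by edges. Given $\F$ with $\mu(\F)\ge 1/2-\eta$, we apply Lemma~\ref{lem:noise} to $\F$ (viewed as a subset of $\{0,1\}^m$). This yields a set $\mathcal G\subseteq\Gamma$ with $\mu(\mathcal G)\ge 1/2+\eta$, and for each $x\in\mathcal G$ a vector $P=P(x)$ with $\sum_e P_e^2\le L$ and $\Prob(x+Y_P\in\F)>\tau$. The aim is to show $\mathcal G\subseteq \F+\C$, which gives the proposition immediately.

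Fix $x\in\mathcal G$ and its $P=P(x)$. The random vector $Y_P$ is exactly the indicator vector of the random graph $G_P$ from Lemma~\ref{lem:cycles}: each edge $e$ appears independently with probability $P_e$. Since $\sum_e P_e^2\le L$, Lemma~\ref{lem:cycles} gives
\[
\Prob(Y_P\notin\C)=\Prob(G_P \text{ contains a cycle})\le\tau .
\]
Together with $\Prob(x+Y_P\in\F)>\tau$, a union bound gives
\[
\Prob\bigl(x+Y_P\in\F \text{ and } Y_P\in\C\bigr)\ge \Prob(x+Y_P\in\F)-\Prob(Y_P\notin\C)>\tau-\tau=0 .
\]
So some realisation $q$ of $Y_P$ is a forest with $f:=x+q\in\F$. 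Since we are in characteristic $2$, $x=f+q\in\F+\C$.

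This holds for every $x\in\mathcal G$, so $\mathcal G\subseteq\F+\C$ and $\mu(\F+\C)\ge\mu(\mathcal G)\ge 1/2+\eta$.

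The only subtle point is the strict inequality: Lemma~\ref{lem:noise} provides $>\tau$ while Lemma~\ref{lem:cycles} provides $\le\tau$, and this is what makes the intersection event nonempty. One should also check that $\tau$ is the same quantity in both lemmas and that $L<1/2$ ensures $\sqrt{2L}<1$, so the geometric series defining $\tau$ converges. Beyond that the argument is bookkeeping: identifying graphs with vectors, and $Y_P$ with $G_P$.
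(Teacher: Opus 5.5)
Your proposal is correct and matches the paper's proof essentially step for step. Both apply Lemma~\ref{lem:noise} to obtain at least $(1/2+\eta)2^m$ good points, use Lemma~\ref{lem:cycles} to get $\Prob(G_P\notin\C)\le\tau$ so the joint event has positive probability, and then write $x=(x+q)+q\in\F+\C$ using $q+q=0$.
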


\begin{proof}
By Lemma~\ref{lem:noise} with $m=\binom n2$, we obtain that for at least $(1/2+\eta)2^m$ points $x\in\{0,1\}^m$, there exists $P=(P_e)_{e\in E(K_n)}\in[0,1]^m$ satisfying $\sum_{e\in E(K_n)}P_e^2\le L$ and $\mathbb{P}\bigl(x+Y_P\in \F\bigr)>\tau$.
We call such points \emph{good}.
Let $G_P$ be the graph whose edge-indicator vector is the random vector $Y_P$.
Then, by Lemma \ref{lem:cycles}, we obtain that $\mathbb{P}\bigl(G_P\notin \C\bigr)\le\tau$.
Therefore, for every good point $x$, we have $\Prob((x+Y_P\in \F)\wedge (G_P\in \C))>0$. 
Hence there exists a forest $q\in\C$ such that $x+q\in \F$.
Since $q+q=0$, it follows that $x=(x+q)+q\in \F+\C$.
We are done as there are at least $(1/2+\eta)2^m$ good points.
\end{proof}

\section{Proof of Theorem~\ref{thm:main}}
Now we are ready to prove our main result.

\begin{proof}[Proof of Theorem~\ref{thm:main}]
Let $\eta<1/4$ be given in Lemma~\ref{lem:noise} and $\varepsilon = \eta/9$.
For $n<4$ the family is empty, so assume $n\ge4$ and $\F\ne\varnothing$.
Put $\alpha=\mu(\F)$. Recall that
\begin{equation}\label{eq:forbidden}
                    (\F+\B)\cap(\F+\one)=\varnothing.
\end{equation}
Since $0\in\B$,~\eqref{eq:forbidden} gives both $\alpha\le1/2$ and
\begin{equation}\label{eq:smallgrowth}
                  \frac{|\F+\B|}{|\F|}\le\frac{1-\alpha}{\alpha}.
\end{equation}

By Lemma~\ref{lem:petridis} applied with $A=\F$ and $B=\B$, there is a nonempty $X\subseteq\F$ with $|X+\B+\B|
       \le\left(\frac{1-\alpha}{\alpha}\right)^2|X|.$
The family $\C$ contains $0$ and every single-edge graph, so it generates
$\Gamma$. 
Together with Proposition~\ref{prop:forest}, we infer that Lemma~\ref{lem:upgrade} applies
to $S=\C$. 
Since $|X|\le M/2$, we have
\[
 (1+2\eta)|X|\le |X+\C|
       \le |X+\B+\B|
       \le\left(\frac{1-\alpha}{\alpha}\right)^2|X|,
\]
where the second inequality uses Lemma~\ref{lem:forests-two}.
It follows that
\begin{equation*}
 \alpha\le\frac1{1+\sqrt{1+2\eta}}
       =\frac12-\frac{\eta}{(1+\sqrt{1+2\eta})^2} \le \frac12 - \frac{\eta}{9} = \frac{1}{2} -\varepsilon. \qedhere
\end{equation*}
\end{proof}


\subsection*{Note on AI-use and others}
The proof was developed via prolonged discussion with ChatGPT Pro model in which the author guided its progress.
The skipped proofs are also elementary modulo concentration inequalities and span roughly two pages in total.

\end{document}